\renewcommand\section{\@startsection{section}{1}{\z@}{-20pt}{8pt}%
  {\normalfont\secsize\secstyle \rightskip=\z@ \@plus 8em\pretolerance=10000 }}
\newcommand{\RR}{{\mathbb{R}}}
\newcommand{\NNN}{{\mathbb{N}}}
\newcommand{\HH}{{\mathcal{H}}}
\newcommand{\GG}{{\mathcal{G}}}
\newcommand{\FF}{{\mathcal{F}}}
\newcommand{\XX}{{\mathbb{X}}}
\renewcommand{\Pr}{\mathsf{P}}
\DeclareMathOperator{\var}{var}
\DeclareMathOperator{\EE}{\mathsf{E}}
\DeclareMathOperator{\Span}{{\rm span}}
\newcommand \dotvar {\bm{\cdot}}
\newcommand \scd[1] {\left\langle\, #1 \,\right\rangle}
\newcommand \defeq  {\mathrel{\mathop:}=}
\providecommand{\abs}[1]{\lvert#1\rvert}
\providecommand{\norm}[1]{\big\lVert#1\big\rVert}
\providecommand{\ns}[1]{\lVert#1\rVert}
\renewcommand{\hat}{\widehat}
\newcommand{\x}{\underline{x}}
\newcommand \NEBi {\ref{NEB-prop}.\ref{NEB-i}\xspace}
\newcommand \NEBii {\ref{NEB-prop}.\ref{NEB-ii}\xspace}
\newcommand \PredMes[2]   {\lambda_{\,#1,#2}}
\newcommand \PredCoeff[3] {\lambda^{#1}(#3;\x_{#2})}
\begin{document}

\title*{%
  Pointwise consistency of the kriging predictor
  with known mean and covariance functions}

\titlerunning{Pointwise consistency of kriging}

\author{Emmanuel Vazquez \and Julien Bect}

\institute{%
  Emmanuel Vazquez \and Julien Bect \at SUPELEC, 
  3 rue Joliot-Curie, 91192 Gif-sur-Yvette, France \\
  \email{{emmanuel.vazquez@supelec.fr} and {julien.bect@supelec.fr}}}
\maketitle

\newcommand \myabstract {This paper deals with several issues related
  to the pointwise consistency of the kriging predictor when the mean
  and the covariance functions are known. These questions are of
  general importance in the context of computer experiments. The
  analysis is based on the properties of approximations in reproducing
  kernel Hilbert spaces.  We fix an erroneous claim of Yakowitz and
  Szidarovszky (J. Multivariate Analysis, 1985) that the kriging
  predictor is pointwise consistent for all continuous sample paths under
  some assumptions.}

\abstract{\myabstract} \abstract*{\myabstract}

\keywords{kriging; reproducing kernel Hilbert space; asymptotics; consistency}

\section{Introduction}
\label{sec:intro}

The domain of \emph{computer experiments} is concerned with making
inferences about the output of an expensive-to-run numerical
simulation of some physical system, which depends on a vector of
factors with values in $\XX\subseteq \RR^d$. The output of the
simulator is formally an unknown function $f:\XX \to\RR$. For example,
to comply with ever-increasing standards regarding pollutant
emissions, numerical simulations are used to determine the level of
emissions of a combustion engine as a function of its design
parameters \citep{villetex:2008:ofc}. The emission of pollutants by
an engine involves coupled physical phenomena whose numerical
simulation by a finite-element method, for a fixed set of design
parameters of the engine, can take several hours on high-end
servers. It then becomes very helpful to collect the answers already
provided by the expensive simulator, and to construct from them a
simpler computer model, that will provide approximate but cheaper
answers about a quantity of interest. This approximate model is often
called a \emph{surrogate}, or a~\emph{metamodel}, or
an~\emph{emulator} of the actual simulator $f$. The quality of the
answers given by the approximate model depends on the quality of the
approximation, which depends, in turn and in part, on the choice of the
evaluation points of $f$, also called \emph{experiments}. The choice
of the evaluation points is usually called the \emph{design of
  experiments}.  Assuming that~$f$ is continuous, it is an important
question to know whether the approximate model behaves consistently,
in the sense that if the evaluation points~$x_n$ are chosen
sequentially in such a way that a given point $x\in \XX$ is an
accumulation point of~$\{x_n,\ n\geq 1\}$, then the approximation at
$x$ converges to $f(x)$.

Since the seminal paper of~\cite{sacks:89:dace}, \emph{kriging} has been
one of the most popular methods for building approximations in the
context of computer experiments \citep[see, e.g.,][]{santner:2003:dace}.
In the framework of kriging, the unknown function~$f$ is seen as a
sample path of a stochastic process~$\xi$, which turns the problem of
approximation of~$f$ into a prediction problem for the process~$\xi$. In
this paper, we shall assume that the mean and the covariance functions
are known. Motivated by the analysis of the \emph{expected improvement}
algorithm \citep{vazquez:2009:ei}, a popular kriging-based optimization
algorithm, we discuss several issues related to the pointwise
consistency of the kriging predictor, that is, the convergence of the
kriging predictor to the true value of~$\xi$ at a fixed point~$x \in \XX$.
These issues are barely documented in the literature, and we believe
them to be of general importance for the asymptotic analysis of
sequential design procedures based on kriging.


The paper is organized as follows.  Section \ref{sec:problems}
introduces notation and various formulations of pointwise consistency,
using the reproducing kernel Hilbert space (RKHS) attached to $\xi$.
Section~\ref{sec:NEB-prop} investigates whether $L^2$-pointwise
consistency at~$x$ can hold when~$x$ is not in the adherence of the
set~$\{x_n, n\geq 1\}$. Conversely, assuming that~$x$ is in the adherence,
Section~\ref{sec:falsethm} studies the set of sample paths~$f =
\xi(\omega,\dotvar)$ for which pointwise consistency holds. In particular, we
fix an erroneous claim of \cite{yakowitz85:_compar_krigin}---namely,
that the kriging predictor is pointwise consistent for all continuous
sample paths under some assumptions.

\section{Several formulations of pointwise consistency}
\label{sec:problems}

Let $\xi$ be a second-order process defined on a probability
space~$( \Omega, \mathcal{A}, \Pr)$, with parameter $x \in \XX \subseteq \RR^d$.
Without loss of generality, it will be assumed that the mean of~$\xi$ is
zero and that $\XX = \RR^d$. The covariance function of~$\xi$ will be
denoted by $k(x,y) \defeq \EE\left[ \xi(x) \xi(y) \right]$, and the following
assumption will be used throughout the paper:
\begin{assum}
  \label{assum:cont}
  The covariance function~$k$ is continuous.
\end{assum}
The kriging predictor of~$\xi(x)$, based on the observations $\xi(x_i)$,
$i=1,\ldots, n$, is the orthogonal projection
\begin{equation}
  \label{eq:def-krig-pred}
  \hat \xi( x; \x_n ) \;\defeq\; \sum_{i=1}^n \PredCoeff{i}{n}{x}\, \xi(x_i)  
\end{equation}
of $\xi(x)$ onto $\Span\{\xi(x_i),i=1,\ldots ,n\}$.  The variance of the prediction
error, also called the \emph{kriging variance} in the literature of
geostatistics \citep[see, e.g.,][]{chiles99}, or the \emph{power
  function} in the literature of radial basis functions~\citep[see,
e.g., ][]{wu93:_local}, is
\begin{align*}
  \label{eq:def-krig-var}
  \sigma^2(x; \x_n)
  \;&\defeq\; \var \left[ \xi(x)- \hat\xi(x;\x_n) \right] \\
  &=\; k(x,x) - \sum_i \PredCoeff{i}{n}{x}\, k(x,x_i) \,.
\end{align*}

For any $x\in \RR^d$, and any sample path $f=\xi(\omega,\dotvar)$,
$\omega\in\Omega$, the values
$\xi(\omega, x)=f(x)$ and~$\hat\xi(\omega, x;\x_n)$ can be seen as the
result of the application of an evaluation functional to $f$. More precisely, let
$\delta_x$ be the Dirac measure at $x\in\RR^d$, and let $\PredMes{n}{x}$
denote the measure with finite support defined by $\PredMes{n}{x} \defeq
\sum_{i=1}^n \PredCoeff{i}{n}{x}\, \delta_{x_i}$.  Then, for all $\omega\in\Omega$,
$\xi(\omega,x) = \scd{ \delta_x,\, f }$ and
$\hat \xi(\omega,x;\x_n) = \scd{ \PredMes{n}{x},\, f }$.
Pointwise consistency at~$x\in\RR^d$, defined in Section~\ref{sec:intro} as the
convergence of~$\hat \xi(\omega,x;\x_n)$ to~$\xi(x)$, can thus be seen as the
convergence of~$\PredMes{n}{x}$ to~$\delta_x$ in some sense.

Let~$\HH$ be the RKHS of functions generated by~$k$, and $\HH^*$ its
dual space. Denote by~$(\cdot, \cdot)_{\HH}$ (resp. $(\cdot,
\cdot)_{\HH^*}$) the inner product of~$\HH$ (resp. $\HH^*$), and
by~$\ns{\cdot}_{\HH}$ (resp.  $\ns{\cdot}_{\HH^*}$) the corresponding
norm. It is well-known \citep[see, e.g.,][]{wu93:_local} that
\begin{equation*}
  \norm{ \delta_x - \PredMes{n}{x} }^2_{\HH^*}
  \;=\; \norm{ k(x,\cdot) - {\sum}_i\, 
    \PredCoeff{i}{n}{x}\, k(x_i,\cdot) }^2_{\HH} 
  \;=\; \sigma^2(x;\x_n) \,.
\end{equation*}
Therefore, the convergence
$\PredMes{n}{x} \to \delta_x$ holds strongly in~$\HH^*$ if and only if the
kriging predictor is $L^2(\Omega, \mathcal{A}, \Pr)$-consistent
at~$x$; that is, if $\sigma^2(x;\x_n)$ converges to zero. 
Since~$k$ is continuous, it is easily seen that $\sigma^2(x;\x_n) \to 0$ as soon as~$x$ is
adherent to~$\{ x_n, n\geq 1\}$. Indeed,
$$
\sigma^2(x,\x_n) \leq \EE[(\xi(x) - \xi(x_{\varphi_n}))^2] = k(x,x)
  + k(x_{\varphi_n},x_{\varphi_n}) - 2k(x,x_{\varphi_n}),
$$
with
$(\varphi_n)_{n\in\NNN}$ a non-decreasing sequence such that
$\forall n\geq 1$, $\varphi_n \leq n$ and $x_{\varphi_n} \to x$.
As explained by
\citet{vazquez:2009:ei}, it is sometimes important to work with
covariance functions such that the converse holds. That leads to
our first open issue, which will be discussed in
Section~\ref{sec:NEB-prop}:

\begin{problem}
  Find necessary and sufficient conditions on a
  continuous covariance~$k$ such that $\sigma^2(x;\x_n) \to 0$ implies
  that $x$ is adherent to~$\{ x_n, n\geq 1\}$.
\end{problem}

Moreover, since
strong convergence in~$\HH^*$ implies weak convergence in~$\HH^*$, we
have
\begin{equation}
  \label{weak-Hstar-const}
  \lim_{n\to \infty} \sigma^2(x;\x_n) = 0 \;\implies\; \forall f \in \HH\,,
  \quad\lim_{n\to \infty} \scd{\PredMes{n}{x}, f} 
  = \scd{ \delta_x,\, f } = f(x)\,. 
\end{equation}
Therefore, if~$x$ is adherent to~$\{ x_n,\, n\geq 1\}$, pointwise consistency
holds for all sample paths $f \in \HH$. However, this result is not
satisfying from a Bayesian point of view since $\Pr\{\xi\in \HH\} = 0$ if $\xi$
is Gaussian \citep[see, e.g.,][Driscoll's
theorem]{lukic01:_stoch_hilber}. In other words, modeling~$f$ as a
Gaussian process means that $f$ cannot be expected to belong
to~$\HH$. This leads to our second problem:
\begin{problem}
  For a given covariance function~$k$, describe the set of functions
  $\GG$ such that, for all sequences $(x_n)_{n \geq 1}$ in~$\RR^d$
  and all $x \in \RR^d$,
  \begin{equation}
    \label{GG-const}
    \lim_{n\to \infty} \sigma^2(x;\x_n) = 0 \;\implies\; \forall f \in \GG\,,
    \quad\lim_{n\to \infty} \scd{\PredMes{n}{x}, f} = f(x)\,.
  \end{equation}
\end{problem}

An important question related to this problem, to be discussed in
Section~\ref{sec:falsethm}, is to know whether the set~$\GG$ contains
the set~$C(\RR^d)$ of all continuous functions. Before proceeding, we
can already establish a result which ensures that considering the
kriging predictor is relevant from a Bayesian point of view.
\begin{theorem}
  \label{thm:as-consistency}
  If $\xi$ is Gaussian, then $\{\xi\not\in\GG\}$ is $\Pr$-negligible.
\end{theorem}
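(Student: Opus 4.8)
By definition of $\GG$, the claim is equivalent to the assertion that, on a set of full probability, the sample path $f = \xi(\omega,\dotvar)$ has the following property: for every sequence $(x_n)_{n\geq 1}$ in $\RR^d$ and every $x\in\RR^d$ with $\sigma^2(x;\x_n)\to 0$, one has $\scd{\PredMes{n}{x},f}\to f(x)$. Recalling that $\scd{\PredMes{n}{x},f}=\hat\xi(\omega,x;\x_n)$ and $f(x)=\xi(\omega,x)$, this amounts to the $\Pr$-almost sure, and \emph{simultaneous over all $(x_n)$ and $x$}, convergence of the kriging predictor to $\xi(x)$ whenever the kriging variance tends to zero. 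The plan is to dispose first of a single, fixed configuration $\bigl(x,(x_n)\bigr)$ by a martingale argument, and then to assemble the resulting null sets into one.

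First, fix $x$ and $(x_n)$, and set $\FF_n := \sigma\bigl(\xi(x_1),\ldots,\xi(x_n)\bigr)$ and $\FF_\infty := \sigma\bigl(\bigcup_n \FF_n\bigr)$. Since $\bigl(\xi(x),\xi(x_1),\ldots,\xi(x_n)\bigr)$ is centred Gaussian, the conditional expectation $\EE[\xi(x)\mid\FF_n]$ is linear in the conditioning variables and hence coincides with the orthogonal projection defining the predictor: $\hat\xi(x;\x_n)=\EE[\xi(x)\mid\FF_n]$. As $\xi(x)\in L^2(\Omega,\mathcal{A},\Pr)$ and $\FF_n\uparrow\FF_\infty$, the sequence $\EE[\xi(x)\mid\FF_n]$ is an $L^2$-bounded martingale, and therefore converges $\Pr$-almost surely and in $L^2$ to $\EE[\xi(x)\mid\FF_\infty]$. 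Passing to the limit in $\sigma^2(x;\x_n)=\EE\bigl[(\xi(x)-\EE[\xi(x)\mid\FF_n])^2\bigr]$ shows that $\sigma^2(x;\x_n)$ decreases to $\var\bigl(\xi(x)-\EE[\xi(x)\mid\FF_\infty]\bigr)$; hence $\sigma^2(x;\x_n)\to 0$ holds if and only if $\xi(x)=\EE[\xi(x)\mid\FF_\infty]$ $\Pr$-a.s., and in that case the almost sure limit of $\hat\xi(x;\x_n)$ is exactly $\xi(x)$. This settles the required convergence for each fixed configuration, outside a $\Pr$-null set $N_{x,(x_n)}$.

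The main difficulty is to replace the uncountable family $\{N_{x,(x_n)}\}$ by a single null set; note that, since \emph{negligible} only requires inclusion in a measurable null set, we need not establish measurability of $\{\xi\notin\GG\}$ itself. Two independent sources of uncountability must be tamed, and neither can be reduced naively: the predictor depends on the \emph{exact} design, so the points $x_i$ cannot be moved to a countable dense set without altering $\hat\xi(x;\x_n)$; and the target value $\xi(\omega,x)$ cannot be approximated by $\xi(\omega,x')$ for nearby $x'$, since the sample paths need not be continuous. To control this, I would use that Assumption~\ref{assum:cont} makes $x\mapsto\xi(x)$ continuous from $\RR^d$ into $L^2(\Omega)$, so that the Gaussian space $H_\xi := \overline{\Span}\{\xi(x):x\in\RR^d\}$ is separable and is generated, together with the underlying $\sigma$-field, by the evaluations at a fixed countable dense set $D\subseteq\RR^d$. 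The decisive structural feature to be exploited is that the subspaces $\Span\{\xi(x_1),\ldots,\xi(x_n)\}$ are spanned by \emph{evaluation} functionals, a far more rigid class than arbitrary finite-dimensional subspaces of $H_\xi$: against general increasing subspaces the pointwise convergence $\EE[Z\mid\FF_n]\to Z$ fails to hold uniformly in $Z$ over the limiting closed subspace, so it is precisely this rigidity that must be invoked. I expect the crux to be a uniformization lemma guaranteeing, off one null set built from the countable data in $D$, that $\EE[\xi(x)\mid\FF_n]\to\xi(x)$ simultaneously for every evaluation-generated filtration and every target $x$ lying in the $L^2$-closure of the corresponding span; the hardest point will be the uniformity over the target $x$, which ties the argument back to the possible irregularity of the sample paths.
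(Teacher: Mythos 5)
Your second paragraph is, almost verbatim, the paper's entire proof: the authors identify $\hat\xi(x;\x_n)$ with $\EE[\xi(x)\mid\FF_n]$, invoke $L^2$-bounded martingale convergence, and stop there --- you are in fact slightly more careful, since you also check that the almost-sure limit equals $\xi(x)$ precisely when $\sigma^2(x;\x_n)\to 0$, a step the paper leaves implicit. The difficulty you isolate in your third paragraph --- passing from a null set $N_{x,(x_n)}$ for each fixed configuration to a single null set valid simultaneously for all sequences $(x_n)$ and all targets $x$, as the quantifier structure in the definition of $\GG$ literally requires --- is not addressed in the paper at all; there is no uniformization lemma to be found there, and the published argument establishes only the fixed-configuration statement. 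So, measured against the paper, your proposal is not missing anything: it reproduces the paper's argument and in addition correctly flags an issue that the paper itself glosses over. Be aware, though, that your sketched route to closing that issue (separability of the Gaussian space, rigidity of evaluation-generated subspaces, a countable dense set $D$) remains a plan rather than a proof, and in particular the uniformity over the target point $x$ that you single out as the hardest step is exactly where a naive reduction to countable data breaks down when the sample paths are not continuous.
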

\begin{proof}
  If $\xi$ is Gaussian, it is well-known that $\hat \xi(x;\x_n) \;=\;
  \EE[\xi(x) \mid \FF_n]$ a.s., where $\FF_{n}$ denotes the $\sigma$-algebra
  generated by $\xi(x_1)$, \ldots, $\xi(x_n)$.  Note that $\left( \EE[\xi(x) \mid
    \FF_n] \right)$ is an $L^2$-bounded martingale sequence and
  therefore converges, a.s.  and in $L^2$-norm, to a random variable
  $\xi_{\infty}$ \citep[see, e.g.,][]{Williams:1991:PwM}.\qed
\end{proof}

\section{Pointwise consistency in $L^2$-norm and the No-Empty-Ball
  property}
\label{sec:NEB-prop}

The following definition has been introduced by
\citet{vazquez:2009:ei}:

\begin{definition}
  \label{NEB-prop}
  A random process~$\xi$ has the No-Empty-Ball
  (NEB) property if, for all sequences $(x_n)_{n \geq 1}$ in~$\RR^d$ and
  all $x \in \RR^d$, the following assertions are equivalent:
  \begin{enumerate}[i)]
  \item\label{NEB-i} $x$ is an adherent point of the set $\{ x_n,\, n
    \geq 1 \}$,
  \item\label{NEB-ii} $\sigma^2(x, \x_n) \to 0$ when $n \to +\infty$.
  \end{enumerate}
\end{definition}
The NEB property implies that there can be no empty ball centered at~$x$
if the prediction error at~$x$ converges to zero---hence the
name. Since~$k$ is continuous, 
the implication \NEBi $\Rightarrow$ \NEBii is
true. Therefore, Problem~$1$ amounts to finding necessary and
sufficient conditions on~$k$ for~$\xi$ to have the NEB property.

Our contribution to the solution of Problem~$1$ will be
twofold. First, we shall prove that the following assumption,
introduced by \cite{yakowitz85:_compar_krigin},
is a sufficient condition for the NEB property:
\begin{assum}
  \label{assum:spectral-cond}
  The process $\xi$ is second-order stationary and has spectral
  density~$S$, with the property that $S^{-1}$ has at most polynomial
  growth.
\end{assum}
In other words, Assumption~\ref{assum:spectral-cond} means that there
exist $C>0$ and $r \in \NNN^{*}$ such that $S(u)(1+|u|^r) \;\geq\; C$,
almost everywhere on~$\RR^d$. Note that this is an assumption on~$k$, which
prevents it from being too regular.  In particular, the
so-called \emph{Gaussian covariance},
\begin{equation}
  \label{eq:Gaussian-RBF-cov}
  k(x,y) = s^2\, e^{- \alpha\, \ns{x-y}^2},
  \qquad s >0,\; \alpha > 0,
\end{equation}
does not satisfy Assumption~\ref{assum:spectral-cond}. In fact, and
this is the second part of our contribution, we shall show that $\xi$
with covariance function~(\ref{eq:Gaussian-RBF-cov})
does not possess the NEB property.
Assumption~\ref{assum:spectral-cond} still allows consideration of a large class
of covariance functions, which includes the class of (non-Gaussian)
exponential covariances
\begin{equation}
  \label{eq:exponential-cov}
  k(x,y) = s^2\, e^{- \alpha\, \ns{x-y}^\beta},
  \qquad s >0,\; \alpha > 0,\; 0<\beta<2\,,    
\end{equation}  
and the class of Matérn covariances~\citep[popularized by][]{Ste99}.

 To summarize, the main result of
this section is:
\begin{proposition}\label{prop:NEB}\mbox{}
  \vspace{-3pt} 
  \begin{enumerate}[i) ]
  \item\label{prop:suff-cond-NEB} If
    Assumption~\ref{assum:spectral-cond} holds, then $\xi$ has the NEB
    property.
  \item\label{prop:GRBF} If $\xi$ has the Gaussian covariance given
    by~(\ref{eq:Gaussian-RBF-cov}), 
    then~$\xi$ does not possess
    the NEB property.
  \end{enumerate}
\end{proposition}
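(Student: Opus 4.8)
The plan is to work throughout with the spectral (Fourier) description of the RKHS, which is available since $\xi$ is stationary in both parts of the statement. Writing $S$ for the spectral density of $k$, the isometry between $\HH$ and $\HH^*$ gives $\ns{f}_{\HH}^2 = \int_{\RR^d}|\hat f(u)|^2\,S(u)^{-1}\,du$ for $f\in\HH$ and $\ns{\mu}_{\HH^*}^2 = \int_{\RR^d}|\hat\mu(u)|^2\,S(u)\,du$ for a finite-support signed measure $\mu$. Since the implication \NEBi$\Rightarrow$\NEBii holds for any continuous covariance, for part (i) it suffices to establish \NEBii$\Rightarrow$\NEBi, and for part (ii) it suffices to produce a single sequence $(x_n)_{n\geq 1}$ and a point $x$ with $\sigma^2(x;\x_n)\to 0$ but $x$ not adherent to $\{x_n,\,n\geq 1\}$. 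I will use repeatedly that $\sigma^2(x;\x_n)=\ns{\delta_x-\PredMes{n}{x}}_{\HH^*}^2$ is the squared $\HH^*$-distance from $\delta_x$ to $\Span\{\delta_{x_i},\,i\leq n\}$.

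For part (i) I would argue by contraposition. Suppose $x$ is not adherent to $\{x_n\}$, so there is $\rho>0$ with $\ns{x-x_i}\geq\rho$ for all $i$. Fix a bump function $\psi\in C^\infty_c(\RR^d)$ with $\psi(x)=1$ and support contained in the open ball $B(x,\rho)$. Every $x_i$ then lies outside the support of $\psi$, so $\scd{\delta_x-\PredMes{n}{x},\,\psi} = \psi(x)-\sum_i\PredCoeff{i}{n}{x}\,\psi(x_i) = 1$ for every $n$. The decisive point, and the only place where Assumption~\ref{assum:spectral-cond} is used, is that $\psi$ belongs to $\HH$: because $\hat\psi$ is rapidly decreasing while $S^{-1}$ grows at most polynomially, the integral $\int|\hat\psi|^2\,S^{-1}$ converges. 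The Cauchy--Schwarz inequality in the $\HH$/$\HH^*$ duality then yields $1 = |\scd{\delta_x-\PredMes{n}{x},\,\psi}| \leq \ns{\delta_x-\PredMes{n}{x}}_{\HH^*}\,\ns{\psi}_{\HH}$, whence $\sigma^2(x;\x_n)\geq\ns{\psi}_{\HH}^{-2}>0$ uniformly in $n$, which contradicts \NEBii. The main obstacle is thus reduced to the single fact that $\HH$ contains a nonzero compactly supported function---precisely what the polynomial-growth hypothesis secures, and precisely what will fail below.

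For part (ii) the governing feature is that the Gaussian spectral density $S(u)\propto e^{-\ns{u}^2/(4\alpha)}$ decays super-exponentially, so that finiteness of $\int|\hat f|^2\,S^{-1}$ forces $\int|\hat f(u)|^2 e^{\ns{u}^2/(4\alpha)}\,du<\infty$, i.e.\ super-exponential decay of $\hat f$; by a Paley--Wiener-type argument this makes every $f\in\HH$ the restriction to $\RR^d$ of an entire function on $\mathbb{C}^d$, hence real-analytic. I would exploit this as follows. Take $x=0$ and let $(x_n)_{n\geq 1}$ enumerate a countable dense subset of a closed ball $\bar B$ with $0\notin\bar B$; then $0$ is not adherent to $\{x_n\}$, so \NEBi fails, and it remains to show $\sigma^2(0;\x_n)\to 0$. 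Since the spaces $V_n\defeq\Span\{\delta_{x_i},\,i\leq n\}$ are nested, $\sigma^2(0;\x_n)$ decreases to the squared $\HH^*$-distance from $\delta_0$ to $V_\infty\defeq\overline{\Span}\{\delta_{x_i},\,i\geq 1\}$. Under the identification $\HH^*\cong\HH$ the orthogonal complement of $V_\infty$ is $\{f\in\HH:\,f(x_i)=0\ \forall i\}$, by the reproducing identity $(f,k(x_i,\cdot))_{\HH}=f(x_i)$. Any such $f$ vanishes on a dense subset of $B$, hence on $B$ by continuity, hence everywhere by analyticity, so in particular $f(0)=0$; therefore $\delta_0\in V_\infty$ and $\sigma^2(0;\x_n)\to 0$, showing that NEB fails.

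The hard part in part (ii) is the rigorous proof that the elements of the Gaussian native space are entire. I would obtain this from the super-exponential decay of $\hat f$ by controlling $f(a+ib)=\int\hat f(u)\,e^{iu\cdot(a+ib)}\,du$ for complex arguments $a+ib$: the factor $e^{\ns{u}\,\ns{b}}$ coming from the imaginary part is dominated by the Gaussian decay of $\hat f$, so the integral converges locally uniformly and defines a holomorphic extension. (Alternatively one may quote a known explicit description of the Gaussian RKHS.) The remaining ingredients---the monotone convergence of $\sigma^2(0;\x_n)$ to $\mathrm{dist}^2(\delta_0,V_\infty)$ and the identification of $V_\infty^\perp$---are routine Hilbert-space facts, so the analyticity of $\HH$ is genuinely the crux that separates part (ii) from part (i).
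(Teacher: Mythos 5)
Your proof is correct and follows essentially the same route as the paper's: in part (i) you exhibit a smooth compactly supported bump function separating $\delta_x$ from $\Span\{\delta_{x_i},\, i\leq n\}$ and use the polynomial-growth hypothesis on $S^{-1}$ to place it in $\HH$, and in part (ii) you take a sequence whose closure has nonempty interior and show that the orthogonal complement of the closed span of the representers is trivial. The only point of divergence is the key lemma of part (ii)---that a function in the Gaussian RKHS vanishing on a set with nonempty interior vanishes identically---which the paper obtains by citing Corollary~3.9 of \citet{steinwart:2006:explicit}, whereas you prove it directly via a Paley--Wiener argument showing that the Gaussian native space consists of restrictions of entire functions; both are valid, yours being self-contained.
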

The proof of Proposition~\ref{prop:NEB} is given in
Section~\ref{proof:prop:NEB}. To the best of our knowledge, finding
necessary and sufficient conditions for the NEB property---in other
words, solving Problem~$1$---is still an open problem.

\section{Pointwise consistency for continuous sample paths}
\label{sec:falsethm}

An important question related to Problem~$2$ is to know whether the
set~$\GG$ contains the set~$C(\RR^d)$ of all continuous functions.
\citet[Lemma 2.1]{ yakowitz85:_compar_krigin} claim, but fail to
establish, the following:
\begin{claaa}
  \label{claim:falseresult}
  Let Assumption~\ref{assum:spectral-cond} hold.  Assume that $\{x_n,\,
  n\geq 1\}$ is bounded, and denote by~$\XX_0$ its (compact) closure
  in~$\RR^d$.  Then, if $x \in \XX_0$,
  $$
  \forall f \in C(\RR^d)\,, \quad\lim_{n\to \infty} \scd{\PredMes{n}{x}, f} = f(x)\,.
  $$
\end{claaa}
Their incorrect proof has two parts, the first of which is correct; it
says in essence that, if $x \in \XX_0$ (i.e., if $x$ is adherent to~$\{
x_n,\, n\geq 1 \}$), then
\begin{equation}
  \label{eq:consist-rapid-decreas}
  \forall f \in \mathcal{S}(\RR^d), \quad\lim_{n\to \infty}
  \scd{\PredMes{n}{x}, f} = f(x)\,,  
\end{equation}
where $\mathcal{S}(\RR^d)$ is the vector space of rapidly decreasing
functions\footnote{Recall that $\mathcal{S}(\RR^d)$ corresponds to those
  $f\in C^{\infty}(\RR^d)$ for which
  \vspace{-4pt} $$
  \sup_{\abs{\nu} \leq N}\,\sup_{x\in\RR^d}~ (1+\abs{x}^{2})^N
  \abs{(D^{\nu} f)(x)} < \infty \vspace{-6pt}
  $$
  for $N=0,1,2,\ldots$, where $D^{\nu}$ denotes differentiation of order $\nu$.}.
In fact, this result stems from the weak convergence
result~(\ref{weak-Hstar-const}), once it has been remarked
that\footnote{Indeed, under Assumption~\ref{assum:spectral-cond}, we have $\forall f\in\mathcal{S}(\RR^d)$,
  \vspace{-4pt} $$
  \ns{f}_{\HH}^2
  = \frac{1}{(2\pi)^d}\int_{\RR^d} \left|\tilde f(u)\right|^2 S(u)^{-1}du 
  \leq \frac{1}{C\,(2\pi)^d}\int_{\RR^d} \left|\tilde f(u)\right|^2\, \left(1+|u|^r\right)\, du 
  < +\infty\,,\vspace{-6pt}
  $$
  where $\tilde f$ is the Fourier transform of $f$ \citep[see,
  e.g.,][]{wu93:_local}.} $\mathcal{S}(\RR^d) \subset \HH$ under  Assumption~\ref{assum:spectral-cond}.

The second part of the proof of~Claim~\ref{claim:falseresult} is flawed
because the extension of the convergence result from $\mathcal{S}(\RR^d)$ 
to $C(\RR^d)$, on the ground that $\mathcal{S}(\RR^d)$ is dense in~$C(\RR^d)$
for the topology of the uniform convergence on compact sets, does not work as
claimed by the authors. To get an insight into this, let $f\in C(\RR^d)$, and
let $(\phi_k) \in \mathcal{S}(\RR^d)^{\NNN}$ be a sequence that converges to
$f$ uniformly on~$\XX_0$.  Then we can write
\begin{align*}
  \left| \scd{\PredMes{n}{x}, f} - f(x) \right| 
  & \;\leq\; 
  \left| \scd{\PredMes{n}{x}, f - \phi_k} \right|
  + \left| \scd{\PredMes{n}{x} - \delta_x, \phi_k} \right|
  + \left| \phi_k(x) - f(x) \right| \\
  & \;\leq\; \left( 1 + 
    \left\lVert \PredMes{n}{x} \right\rVert_{\rm TV}
  \right)\, \sup_{\XX_0} \left| f - \phi_k \right|
  \,+\, \left| \scd{\PredMes{n}{x} - \delta_x, \phi_k} \right| \,,
\end{align*}
where $\ns{\PredMes{n}{x}}_{\rm TV} \defeq \sum_{i=1}^n \abs{\lambda^i(x;\x_n)}$
is the total variation norm of~$\PredMes{n}{x}$, also called the \emph{Lebesgue constant}
(at $x$) in the literature of approximation theory. If we assume that
the Lebesgue constant is bounded by~$K>0$, then we get,
using~(\ref{eq:consist-rapid-decreas}),
\begin{equation*}
  \limsup_{n\to \infty}\, \left| \scd{\PredMes{n}{x}, f} - f(x) \right| 
  \;\leq\; \left( 1 + K \right)\; \sup_{\XX_0}\,  \left| f - \phi_k \right|
  \;\xrightarrow[k\to\infty]{}\; 0 \,.
\end{equation*}
Conversely, if the Lebesgue constant is not bounded, the
Banach-Steinhaus theorem asserts that there exists a dense subset~$G$ of
$\left( C(\RR^d), \left\lVert \dotvar \right\rVert_\infty \right)$ such that, for all $f \in G$, 
$\sup_{n\geq1} \abs{\scd{\PredMes{n}{x}, f}} = +\infty$ \citep[see,
e.g.,][Section~5.8]{rundin87:_real}.

Unfortunately, little is known about Lebesgue constants in the literature
of kriging and kernel regression.  To the best of our knowledge, whether
the Lebesgue constant is bounded remains an open problem---although
there is empirical evidence
in~\cite{deMarchi08:_stabil_of_kernel_based_inter} that the Lebesgue
constant could be bounded in some cases.

Thus, the best result that we can state for now is a fixed version of
\cite{yakowitz85:_compar_krigin}, Lemma 2.1.
\begin{theorem}
  \label{thm:yako_corrected}
  Let Assumption~\ref{assum:spectral-cond} hold. Assume that $\{x_n,\, n\geq
  1\}$ is bounded, and denote by~$\XX_0$ its (compact) closure
  in~$\RR^d$.  Then, for all $x \in \XX_0$, the following assertions are
  equivalent:
  \begin{enumerate}[i) ]
  \item $\forall f \in C(\RR^d)$, $\lim_{n\to \infty} \scd{\PredMes{n}{x}, f} =
    f(x)$,
  \item the Lebesgue constant at~$x$ is bounded.
  \end{enumerate}
\end{theorem}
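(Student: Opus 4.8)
The plan is to recast the two assertions as a uniform-boundedness statement about the sequence of linear functionals $T_n : f \mapsto \scd{\PredMes{n}{x}, f}$ acting on the Banach space $C(\XX_0)$ equipped with the supremum norm, and then to invoke the Banach--Steinhaus theorem in both directions. The first step is to observe that, since every support point $x_i$ belongs to $\XX_0$ and $x \in \XX_0$ by hypothesis, both $T_n$ and the target functional $f \mapsto f(x) = \scd{\delta_x, f}$ depend on $f$ only through its restriction $f|_{\XX_0}$; hence the whole problem can be transported to $C(\XX_0)$, which is a Banach space because $\XX_0$ is compact. By the duality between $C(\XX_0)$ and finite signed measures, and because $\PredMes{n}{x}$ is a discrete measure supported on finitely many distinct points of $\XX_0$, the operator norm of $T_n$ equals its total-variation norm $\ns{\PredMes{n}{x}}_{\rm TV}$, i.e. exactly the Lebesgue constant at $x$.

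For the implication (ii) $\Rightarrow$ (i), I would simply formalize the three-term estimate already displayed before the theorem. Fixing $f \in C(\RR^d)$, I choose $\phi_k \in \mathcal{S}(\RR^d)$ with $\phi_k \to f$ uniformly on $\XX_0$; this is possible because $C_c^\infty(\RR^d) \subset \mathcal{S}(\RR^d)$ and smooth compactly supported functions are dense in $C(\XX_0)$ for the uniform norm. Splitting $\scd{\PredMes{n}{x}, f} - f(x)$ as in the text and bounding the first term by $(1+K)\sup_{\XX_0}|f - \phi_k|$ under the hypothesis $\ns{\PredMes{n}{x}}_{\rm TV} \le K$, the middle term $\scd{\PredMes{n}{x} - \delta_x, \phi_k}$ tends to zero by~(\ref{eq:consist-rapid-decreas}), which itself follows from the weak-$\HH^*$ convergence~(\ref{weak-Hstar-const}) together with the inclusion $\mathcal{S}(\RR^d) \subset \HH$. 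Letting $n \to \infty$ and then $k \to \infty$ yields the desired convergence for every continuous $f$.

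For the converse (i) $\Rightarrow$ (ii), I would argue by contraposition. If the Lebesgue constant is unbounded, then $\sup_n \ns{T_n} = +\infty$, so the family $(T_n)$ is not uniformly bounded. By the Banach--Steinhaus theorem the set $\{ f \in C(\XX_0) : \sup_n |T_n f| = +\infty \}$ is a dense $G_\delta$, in particular nonempty; for any such $f$ the sequence $\scd{\PredMes{n}{x}, f}$ is unbounded and therefore cannot converge to $f(x)$, so (i) fails. This is precisely the Banach--Steinhaus dichotomy already recorded in the text preceding the theorem.

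Since the argument is assembled almost entirely from pieces established in the preceding discussion, there is no single genuinely hard step; the only points requiring care are the identification of $\ns{T_n}$ with the Lebesgue constant (which needs the support points to be distinct and a Urysohn/Tietze-type extension to attain the supremum) and the density of $\mathcal{S}(\RR^d)$ in $C(\XX_0)$ for the uniform norm. The main conceptual subtlety I would highlight is the gap between the two statements: (i) concerns a \emph{limit} whereas (ii) concerns a \emph{supremum}. The uniform boundedness principle is exactly the bridge, converting ``bounded operator norms plus convergence on a dense subspace'' into convergence everywhere, and ``unbounded operator norms'' into generic divergence.
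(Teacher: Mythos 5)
Your proof is correct and follows essentially the same route as the paper, whose argument for Theorem~\ref{thm:yako_corrected} is exactly the discussion preceding its statement: the three-term estimate with $\phi_k\in\mathcal{S}(\RR^d)$ and the bound $(1+K)\sup_{\XX_0}|f-\phi_k|$ for (ii)~$\Rightarrow$~(i), and the Banach--Steinhaus dichotomy for the contrapositive of (i)~$\Rightarrow$~(ii). Your only additions---working on the Banach space $C(\XX_0)$ rather than $C(\RR^d)$ and flagging the identification of the operator norm of $T_n$ with the Lebesgue constant---are minor tightenings of the same argument.
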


\section{Proof of Proposition~\ref{prop:NEB}}
\label{proof:prop:NEB}

Assume that~$x \in \RR^d$ is not adherent to~$ \{ x_n,\, n\!\geq 1
\}$. Then, there exists a $C^\infty(\RR^d)$ compactly supported function
$f$ such that $ f(x) \neq 0$ and $f(x_i) = 0$, $\forall i \in \{1,
\ldots, n \}$.  For such a function, the quantity $\scd{\PredMes{n}{x},
  f}$ cannot converge to~$f(x)$ since
\begin{equation*}
  \scd{\PredMes{n}{x}, f} \;=\;
  \sum_{i=1}^n \PredCoeff{i}{n}{x}\, f(x_i) \;=\; 0 \;\neq\; f(x) \,. 
\end{equation*}
Under Assumption~\ref{assum:spectral-cond}, $\mathcal{S}(\RR^d) \subset
\HH$, as explained in Section~\ref{sec:falsethm}. Thus, $f \in
\HH$; and it follows that $\PredMes{n}{x}$ cannot converge (weakly, hence
strongly) to~$\delta_x$ in~$\HH^*$. This proves the first assertion of
Proposition~\ref{prop:NEB}.

\medbreak
In order to prove the second assertion, pick any
sequence~$(x_n)_{n\geq1}$ such that the closure $\XX_0$ of~$\{ x_n,\, n\geq
1\}$ has a non-empty interior. We will show that $\sigma²(x;\x_n) \to 0$ for
\emph{all} $x \in \RR^d$. Then, choosing $x \not\in \XX_0$ proves the claim.

\def \LL {L^2 \left(\Omega, \mathcal{A}, \Pr \right)}

Recall that $\hat \xi(x;\x_n)$ is the orthogonal projection of~$\xi(x)$ onto
$\Span\{\xi(x_i),i=1,\ldots ,n\}$ in~$\LL$. Using the fact that the mapping $\xi(x)
\mapsto k(x,\cdot)$ extends linearly to an isometry\footnote{often referred to as
  Loève's isometry \citep[see, e.g.,][]{lukic01:_stoch_hilber}} from
$\overline\Span\{ \xi(y),\, y\in\RR^d \}$ to $\HH$, we get that
\begin{equation*}
  \sigma(x;\x_n) \;=\; \norm{ \xi(x)  - \hat\xi(x;\x_n) }
  \;=\; \mathop{d_\HH}\! \left( k(x,\cdot),\, H_n \right) \,,
\end{equation*}
where $d_{\HH}$ is the distance in $\HH$, and $H_n$ is the subspace
of~$\HH$ generated by~$k(x_i,\cdot)$, $i=1,\ldots,n$. Therefore
\begin{equation*}
  \lim_{n\to \infty} \sigma(x;\x_n) \;=\;
  \lim_{n\to \infty}  d_\HH\! \left( k(x,\cdot),\, H_n \right) \;=\;
  d_\HH\! \left( k(x,\cdot),\, H_\infty \right)\,,  
\end{equation*}
where $H_\infty = \overline{\cup_{n\geq1} H_n }$. Any function $f \in H_\infty^\perp$
satisfies $f(x_i) = \left( f,\, k(x_i,\cdot) \right) = 0$ and therefore
vanishes on~$\XX_0$, since~$\HH$ is a space of continuous functions.
Corollary~3.9 of \citet{steinwart:2006:explicit} leads to the conclusion that
$f=0$ since~$\XX_0$ has a non-empty interior. We have proved that $H_\infty^\perp
= \{0\}$, hence that $H_\infty = \HH$ since~$H_\infty$ is a closed subspace.
As a consequence, $\lim_{n\to \infty} \sigma(x;\x_n) = d_\HH\! \left( k_x,\, H_\infty
\right) = 0$, which completes the proof.
\qed

\bibliography{refs}

\end{document}